\theoremstyle{plain}
\newtheorem{corollary}{Corollary}
\theoremstyle{plain}
\newtheorem{lemma}{Lemma}
\theoremstyle{remark}
\newtheorem{remark}{Remark}
\theoremstyle{plain}
\newtheorem{theorem}{Theorem}
\begin{document}

\title{Willmore type inequality using monotonicity formulas}

\author{Xiaoxiang Chai}
\email{chaixiaoxiang@gmail.com}

\begin{abstract}
  Simon type monotonicity formulas for the Willmore functional $\int | \mathbf{H} |^2$ in the hyperbolic space $\mathbb{H}^n$ and $\mathbb{S}^n$ are obtained. The formula gives a lower bound of $\int_{\Sigma} | \mathbf{H} |^2$ where $\Sigma^2$ is any closed surface in $\mathbb{H}^n$.
\end{abstract}

{\maketitle}

\section{Introduction}

We study the Willmore functional
\begin{equation}
  \int_{\Sigma} | \mathbf{H} |^2
\end{equation}
for a 2-surface $\Sigma^2$ in the standard hyperbolic $n$-space $\mathbb{H}^n$
of constant sectional curvature -1 and standard $n$-sphere $\mathbb{S}^n$.
Here $\mathbf{H}$ is the mean curvature vector of $\Sigma$ in the respective
ambients.
Willmore established the following classic result in the Euclidean space
$\mathbb{R}^3$,

\begin{theorem}
  (See for example {\cite{willmore-mean-1971}}) Given any closed smooth
  2-surface $\Sigma^2 \subset \mathbb{R}^3$,
  \begin{equation}
    \int_{\Sigma} | \mathbf{H} |^2 \geqslant 16 \pi \label{euclidean}
  \end{equation}
  with equality occurring if and only if $\Sigma$ is a standard sphere of any
  radius.
\end{theorem}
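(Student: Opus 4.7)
Given the paper's stated focus on Simon-type monotonicity formulas, the natural route to \eqref{euclidean} is the classical Simon--Li--Yau monotonicity argument, and I would follow that here.

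Fix an arbitrary base point $x_0 \in \Sigma$ and, for $\rho > 0$, introduce the density-type quantity
\begin{equation}
\Phi(\rho) = \frac{|\Sigma \cap B_\rho(x_0)|}{\pi \rho^2} + \frac{1}{16\pi} \int_{\Sigma \cap B_\rho(x_0)} |\mathbf{H}|^2,
\end{equation}
with $B_\rho(x_0) \subset \mathbb{R}^3$ the Euclidean ball. The central technical step is the monotonicity identity: $\Phi$ is non-decreasing in $\rho$. I would derive this by applying the tangential divergence theorem on $\Sigma$ to a cut-off radial test field of the form $\eta(|x-x_0|/\rho)(x-x_0)$, using the standard formula $\Delta_\Sigma |x-x_0|^2 = 4 + 2 (x-x_0) \cdot \mathbf{H}$, and then completing the square. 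Differentiating $\Phi$ in $\rho$ produces a non-negative remainder proportional to $\bigl|\mathbf{H} + 2(x-x_0)^\perp / |x-x_0|^2\bigr|^2$, which gives the monotonicity.

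Once monotonicity is available, the inequality follows from two limits. As $\rho \to 0^+$, smoothness of $\Sigma$ at $x_0$ yields $|\Sigma \cap B_\rho(x_0)| = \pi \rho^2 + o(\rho^2)$ while the curvature integral is $O(\rho^2)$, so $\Phi(\rho) \to 1$. As $\rho \to \infty$, closedness of $\Sigma$ forces $|\Sigma \cap B_\rho|/\rho^2 \to 0$ and $\int_{\Sigma \cap B_\rho} |\mathbf{H}|^2 \to \int_\Sigma |\mathbf{H}|^2$. Monotonicity of $\Phi$ then yields
\begin{equation}
1 = \lim_{\rho \to 0^+}\Phi(\rho) \leq \lim_{\rho \to \infty}\Phi(\rho) = \frac{1}{16\pi} \int_\Sigma |\mathbf{H}|^2,
\end{equation}
which is exactly \eqref{euclidean}.

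The main obstacle is establishing the monotonicity identity with the sharp constant $1/16$ in front of the Willmore integrand: an off-by-a-constant version would give only a non-sharp inequality, and it is precisely the completion-of-squares step, combined with careful bookkeeping of the radial derivative of $\rho^{-2}|\Sigma \cap B_\rho|$, that forces the correct value. For the equality case, equality at all scales forces the non-negative remainder $\bigl|\mathbf{H} + 2(x-x_0)^\perp/|x-x_0|^2\bigr|^2$ to vanish identically on $\Sigma$ for every $x_0 \in \Sigma$; a short argument along radial rays then pins $\Sigma$ down as a round sphere of some radius.
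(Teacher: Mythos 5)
The paper does not actually prove this theorem --- it quotes it with a citation --- so the honest comparison is with the paper's own proof of the hyperbolic analogue (Theorem \ref{thm:mono}), which is exactly the Simon-type monotonicity argument you propose, transplanted to $\mathbb{H}^n$ with $|x-x_0|^2$ replaced by $w(r)=\cosh r-1$. Your route is therefore the right one in outline, but one central step fails as written: the quantity
\begin{equation}
  \Phi(\rho) \;=\; \frac{|\Sigma\cap B_\rho(x_0)|}{\pi\rho^2} \;+\; \frac{1}{16\pi}\int_{\Sigma\cap B_\rho(x_0)}|\mathbf{H}|^2
\end{equation}
is \emph{not} monotone in $\rho$. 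If you carry out the tangential divergence theorem with the test field $\bigl((\max(r,\sigma)^{-2}-\rho^{-2})_+\bigr)(x-x_0)$ and complete the square, the boundary contributions of $-\int_\Sigma \langle Y,\mathbf{H}\rangle$ at the radii $\sigma$ and $\rho$ do not cancel; what is actually non-decreasing is
\begin{equation}
  \gamma(\rho) \;=\; \Phi(\rho) \;+\; \frac{1}{2\pi\rho^2}\int_{\Sigma\cap B_\rho(x_0)}(x-x_0)\cdot\mathbf{H},
  \qquad
  \gamma(\rho)-\gamma(\sigma)=\frac{1}{\pi}\int_{\Sigma_\rho\setminus\Sigma_\sigma}\Bigl|\tfrac14\mathbf{H}+\frac{(x-x_0)^{\bot}}{|x-x_0|^2}\Bigr|^2 .
\end{equation}
You can see that the cross term cannot be dropped by testing on a round sphere with $x_0\in\Sigma$: there $\gamma$ is constant while $\tfrac{1}{2}\rho^{-2}\int_{\Sigma_\rho}(x-x_0)\cdot\mathbf{H}=-\tfrac12\rho^{-2}\int_{\Sigma_\rho}|x-x_0|^2$ is first increasing and then decreasing, so $\Phi=\gamma$ minus this term is genuinely non-monotone. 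This cross term is precisely the Euclidean analogue of the terms $\phi(\rho)\int_{\Sigma_\rho}\sinh r\,\nabla^{\bot}r\cdot\mathbf{H}$ that the paper retains in its identity \eqref{crude}. The omission is harmless for your endgame, since $\bigl|\rho^{-2}\int_{\Sigma_\rho}(x-x_0)\cdot\mathbf{H}\bigr|\le\rho^{-1}\int_\Sigma|\mathbf{H}|\to 0$ as $\rho\to\infty$ and the term is $O(\rho)$ as $\rho\to 0^+$, so $\gamma$ has the same two limits you compute for $\Phi$ and the inequality $16\pi\le\int_\Sigma|\mathbf{H}|^2$ follows; but the monotonicity must be asserted for $\gamma$, not $\Phi$. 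A second, smaller slip: with the normalization that makes $\int_\Sigma|\mathbf{H}|^2=16\pi$ on the unit sphere (so $|\mathbf{H}|=2$ there), the remainder whose vanishing characterizes equality is $\mathbf{H}+4(x-x_0)^{\bot}/|x-x_0|^2$, not $\mathbf{H}+2(x-x_0)^{\bot}/|x-x_0|^2$; your factor $2$ is inconsistent with the constant $1/(16\pi)$ you carry in $\Phi$, and with it the round sphere would not satisfy the equality condition.
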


There are many proofs of the inequality {\eqref{euclidean}}. For instance, one
can invoke classical differential geometry methods (see for example
{\cite[Theorem 4.46]{kuhnel-differential-2005}}), and Simon
{\cite{simon-existence-1993}} (cf. Gilbarg and Trudinger {\cite[Eq.
(16.31)]{gilbarg-elliptic-1983}}) obtained the inequality using a monotonicity
formula.

By a stereographic projection from $\mathbb{S}^3 \backslash \{0, 0, 0, 1\}
\subset \mathbb{R}^4$ to $\mathbb{R}^3 \times \{0\}$, one obtains an analog
result for $\mathbb{S}^3$ from the result of the Euclidean 3-space.

\begin{theorem}
  (See Introduction in {\cite{marques-min-max-2014}}) Given any closed smooth
  surface $\Sigma^2 \subset \mathbb{S}^3$,
  \begin{equation}
    \frac{1}{4} \int_{\Sigma} | \mathbf{H} |^2 \geqslant 4 \pi - | \Sigma |
    \label{sphere}
  \end{equation}
  with equality if and only if $\Sigma$ is a geodesic sphere in
  $\mathbb{S}^3$.
\end{theorem}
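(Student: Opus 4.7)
The plan is to reduce to the Euclidean Willmore inequality via stereographic projection, exploiting the conformal invariance of $\int_\Sigma |\mathring{A}|^2\, dA$, where $\mathring{A}$ denotes the trace-free second fundamental form. Since $\Sigma$ is two-dimensional inside the three-dimensional $\mathbb{S}^3$, I can choose a pole $p \in \mathbb{S}^3 \setminus \Sigma$ and consider the conformal diffeomorphism $\pi\colon \mathbb{S}^3 \setminus \{p\} \to \mathbb{R}^3$. Setting $\tilde\Sigma = \pi(\Sigma)$ produces a smooth closed surface in $\mathbb{R}^3$.

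The core algebraic input is the pointwise identity, valid for a 2-surface in a 3-manifold of constant sectional curvature $c$,
\begin{equation}
  \tfrac{1}{2} |\mathring{A}|^2 = \tfrac{1}{4} |\mathbf{H}|^2 + c - K_\Sigma,
\end{equation}
obtained from $|\mathring{A}|^2 = |A|^2 - \tfrac{1}{2}|\mathbf{H}|^2$ together with the Gauss equation $\kappa_1 \kappa_2 = K_\Sigma - c$. Integrating and applying Gauss-Bonnet yields
\begin{equation}
  \tfrac{1}{2}\int_\Sigma |\mathring{A}|^2\, dA = \tfrac{1}{4} \int_\Sigma |\mathbf{H}|^2\, dA + c\,|\Sigma| - 2\pi \chi(\Sigma).
\end{equation}

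Next I would invoke the classical fact that $|\mathring{A}|^2\, dA$ is pointwise conformally invariant: under $\tilde g = e^{2u} g$ one has $\tilde{\mathring{A}}_{ij} = e^u \mathring{A}_{ij}$ and $d\tilde A = e^{2u}\, dA$, so the product is preserved. Applying the integrated identity in both $\mathbb{S}^3$ (with $c=1$) and $\mathbb{R}^3$ (with $c=0$) and equating the conformally invariant left-hand sides gives
\begin{equation}
  \tfrac{1}{4} \int_{\tilde\Sigma} |\mathbf{H}|^2\, d\tilde A = \tfrac{1}{4} \int_\Sigma |\mathbf{H}|^2\, dA + |\Sigma|,
\end{equation}
after which the Euclidean Willmore bound $\int_{\tilde\Sigma} |\mathbf{H}|^2 \, d\tilde A \geq 16\pi$ produces the claim. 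Equality in $\mathbb{R}^3$ forces $\tilde\Sigma$ to be a round sphere, and since $\pi$ sends geodesic 2-spheres of $\mathbb{S}^3$ to round spheres or hyperplanes in $\mathbb{R}^3$ (the compact image ruling out the latter), $\Sigma$ must be a geodesic sphere.

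The strategy is largely mechanical. The only step I expect to require genuine care is the conformal transformation rule for $\mathring{A}$, which is the technical heart of the argument; everything else is bookkeeping of curvature, area, and Euler characteristic contributions across the two ambient geometries. The choice $p \notin \Sigma$, although trivial, deserves mention, since it is what guarantees $\tilde\Sigma$ is compact so that the Euclidean inequality applies with no limiting argument at infinity.
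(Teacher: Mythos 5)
Your argument is correct and follows exactly the route the paper indicates for this theorem: it is a cited result, and the paper's only comment on its proof is that it follows from the Euclidean case by stereographic projection, which is precisely what you carry out (via the conformal invariance of $\int_\Sigma |\mathring{A}|^2\,dA$ together with the Gauss equation and Gauss--Bonnet). The pointwise identity, the bookkeeping of $c\,|\Sigma|$ across the two ambients, and the equality discussion (round spheres pulling back to geodesic spheres, with compactness excluding planes) all check out.
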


Chen {\cite{chen-conformal-1974}} observed the conformal invariance properties
of the Willmore functional, so his proof worked well for both the hyperbolic
$n$-space and the $n$-sphere.

\begin{theorem}
  \label{thm:hyperbolic}(See {\cite{chen-conformal-1974}}) Give any closed
  smooth 2-surface $\Sigma^2 \subset \mathbb{H}^n$,
  \begin{equation}
    \frac{1}{4} \int_{\Sigma} | \mathbf{H} |^2 \geqslant 4 \pi + | \Sigma |
    \label{hyperbolic}
  \end{equation}
  with equality occurring if and only if $\Sigma$ lies in a $\mathbb{H}^3$
  subspace as a geodesic sphere.
\end{theorem}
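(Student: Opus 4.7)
The plan is to adapt Simon's Euclidean monotonicity argument to $\mathbb{H}^{n}$. Fix a point $x_{0} \in \Sigma$, let $r(x) = d_{\mathbb{H}^{n}}(x, x_{0})$, and set $\phi = \cosh r$. The key algebraic fact is the Hessian identity $\nabla^{2}\phi = \phi\, g_{\mathbb{H}^{n}}$, the hyperbolic counterpart of $\nabla^{2}(\tfrac{1}{2}|x-x_{0}|^{2}) = g_{\mathbb{R}^{n}}$ used in Simon's original derivation. Restricting to $\Sigma^{2}$ yields
$$
\Delta^{\Sigma}\phi = 2\phi + \langle \mathbf{H}, \nabla \phi\rangle, \qquad |\nabla^{\Sigma}\phi|^{2} = \sinh^{2} r - |\nabla^{\perp}\phi|^{2},
$$
and these two identities are the entire input to the monotonicity formula.

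From here I would follow Simon's scheme: multiply by cutoffs depending on $\phi$, integrate over $\Sigma$, integrate by parts against $\partial(\Sigma \cap B_{\rho}(x_{0}))$, and complete the square on the cross term $\langle \mathbf{H}, \nabla\phi\rangle$ to expose $|\mathbf{H}|^{2}$. Because the hyperbolic Hessian is $\phi\, g$ rather than $g$, the computation produces an additional positive contribution proportional to $|\Sigma \cap B_{\rho}|$ compared with Simon's Euclidean formula. The resulting monotone quantity should take the schematic form
$$
F(\rho) = \frac{|\Sigma \cap B_{\rho}(x_{0})|}{w(\rho)^{2}} + \frac{1}{16}\int_{\Sigma \cap B_{\rho}(x_{0})}|\mathbf{H}|^{2} + G(\rho),
$$
with $F'(\rho) \geq 0$, where $w(\rho)$ is a hyperbolic replacement for the Euclidean radius (a natural candidate is $2\sinh(\rho/2)$, which matches $\rho$ near $0$) and $G$ absorbs the accumulated negative-curvature contribution. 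As $\rho \downarrow 0$, smoothness of $\Sigma$ at $x_{0}$ gives $|\Sigma \cap B_{\rho}|/w(\rho)^{2} \to \pi$ (Euclidean density $1$) and the remaining terms vanish, so $F(0^{+}) = \pi$. As $\rho \uparrow \infty$ the compact surface is fully captured, the Willmore term approaches $\tfrac{1}{16}\int_{\Sigma}|\mathbf{H}|^{2}$, and the area-plus-correction terms are designed to assemble into $-\tfrac{1}{4}|\Sigma|$. Monotonicity then gives $\pi \leq \tfrac{1}{16}\int_{\Sigma}|\mathbf{H}|^{2} - \tfrac{1}{4}|\Sigma|$, which rearranges to \eqref{hyperbolic}.

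Rigidity is read off from the completion-of-squares step: equality forces $\nabla^{\perp}\phi \equiv 0$ on $\Sigma$ and $\mathbf{H}$ collinear with $\nabla \phi$, so $\Sigma$ is invariant under the hyperbolic rotations fixing $x_{0}$ that preserve $\phi$; combined with density exactly $1$ at $x_{0}$, this identifies $\Sigma$ as a round geodesic sphere lying in the totally geodesic $\mathbb{H}^{3}$ through $x_{0}$ tangent to $\Sigma$. The main technical difficulty I foresee is the correct calibration of the weight $w(\rho)$ and correction term $G(\rho)$ so that the limits at the two endpoints land on exactly $4\pi$ and $|\Sigma|$; the extra factor of $\phi$ in the hyperbolic Hessian produces cross terms and boundary contributions absent in the Euclidean case, and verifying $F'(\rho) \geq 0$ in closed form requires careful bookkeeping of every term generated by the integration by parts.
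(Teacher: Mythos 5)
Your plan for the inequality itself is essentially the paper's argument. The paper applies the first variation formula to $Y=(\phi_\sigma(r)-\phi(\rho))_+X$ with $X=\sinh r\,\nabla r=\nabla\cosh r$ and $\phi(r)^{-1}=w(r)=\cosh r-1$, uses $\operatorname{tr}_\Sigma\nabla^2\cosh r=2\cosh r$ (your Hessian identity, via Lemma \ref{comparison}), the calibration identity $2\phi V+\phi'\sinh r=1$, and completes the square to get \eqref{crude}; letting $\sigma\to0$, $\rho\to\infty$ gives \eqref{mono} and hence \eqref{hyperbolic}. Your candidate weight $w(\rho)=2\sinh(\rho/2)$ satisfies $w(\rho)^2=2(\cosh\rho-1)$, so it is the correct normalization, and your density limit $|\Sigma\cap B_\rho|/w(\rho)^2\to\pi$ is exactly the paper's \eqref{multiplicity}. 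What you leave unverified --- the closed form of $G(\rho)$ and the sign $F'\geq 0$ --- is precisely the content of \eqref{eq:2} and \eqref{eq:4}, so this part is a correct outline of the same route, not a different proof.

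The genuine gap is in the rigidity discussion. Equality in the monotonicity formula does \emph{not} force $\nabla^\perp\phi\equiv 0$: it forces $\tfrac{1}{w}X^\perp+\tfrac12\mathbf{H}=0$ pointwise, i.e. $\mathbf{H}(y)=-\tfrac{2}{w(\rho)}X^\perp$, and this must hold for \emph{every} choice of base point $x\in\Sigma$ and every $y\in\Sigma$. If $\nabla^\perp\phi$ vanished identically you would get $\mathbf{H}\equiv 0$ and $\int_\Sigma|\mathbf{H}|^2=0$, contradicting the inequality you just proved; so your stated equality condition is inconsistent, and the ensuing claim that $\Sigma$ is invariant under the rotations fixing $x_0$ does not follow from anything you have established (no symmetry of $\Sigma$ is produced by the pointwise identity alone). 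The paper instead exploits the two-point nature of the equality condition: fixing $y$ with $\mathbf{H}(y)\neq 0$ and varying $x$, the identity \eqref{h3} shows every geodesic from $y$ into $\Sigma$ has initial velocity in $\operatorname{span}\{e_1,e_2,\mathbf{H}(y)\}$, which places $\Sigma$ in a totally geodesic $\mathbb{H}^3$; then a hyperboloid-model computation yields $\tfrac12 H(y)=\langle x,\nu\rangle/(1+\langle x,y\rangle)$ for all $x\in\Sigma$, from which one rules out $H\leq 0$ and deduces that all points of $\Sigma$ are equidistant from a fixed center, i.e. $\Sigma$ is a geodesic sphere. Some argument of this two-point type (or an equivalent substitute) is needed; the rotational-invariance shortcut as written does not close the equality case.
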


We are concerned with the hyperbolic case mainly, and with the notations not
clarified now we have the following result.

\begin{theorem}
  \label{thm:mono}Give any closed 2-surface $\Sigma^2 \subset \mathbb{H}^n$,
  if $o \in \Sigma$ is a point of multiplicity $k \geqslant 1$,
  \begin{equation}
    | \Sigma | + 4 k \pi = - \int_{\Sigma} \left| \frac{1}{w} X^{\bot} +
    \frac{1}{2} \mathbf{H} \right|^2 + \frac{1}{4} \int_{\Sigma} | \mathbf{H}
    |^2 . \label{mono}
  \end{equation}
\end{theorem}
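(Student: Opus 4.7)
The plan is to adapt Simon's Euclidean monotonicity argument to $\mathbb{H}^n$. I fix a model in which there is a distinguished ``position vector'' $X$ based at $o \in \Sigma$ together with an associated radial weight $w$: either the Poincar\'e ball (with $X$ the Euclidean position vector and $w = 2/(1-|X|^2_E)$ the conformal factor of the hyperbolic metric), or the Minkowski embedding $\mathbb{H}^n \hookrightarrow \mathbb{R}^{n,1}$, with $P$ the ambient position vector ($\langle P,P\rangle = -1$), $w := -\langle P, o\rangle = \cosh d(o,\cdot)$, and $X := wP - o \in T_P\mathbb{H}^n$ of Minkowski length $\sinh d(o,\cdot) = \sqrt{w^2 - 1}$. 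The Minkowski model is particularly convenient because $\nabla P = \mathrm{Id}$, $dw(V) = -\langle V, o\rangle$, and $\langle P, T\Sigma\rangle = 0$ (since $T\Sigma \subset T\mathbb{H}^n = P^\bot$); additionally, the Minkowski mean-curvature vector of $\Sigma$ equals $\vec h = \mathbf{H} + 2P$, encoding the embedding $\mathbb{H}^n \subset \mathbb{R}^{n,1}$, and the $2P$ contribution will drop out of the first variation against any $Y \in T\mathbb{H}^n$.

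The centerpiece of the proof is a pointwise divergence identity on $\Sigma$ for a suitable radial field $Y$ built from $X$ and $w$ (the hyperbolic analogue of Simon's $X/|X|^2$). Using $\nabla P = \mathrm{Id}$ and the orthogonality $\langle P, T\Sigma\rangle = 0$, the computation should collapse to an expression of the schematic form
\[
\operatorname{div}_\Sigma Y \;=\; 1 \;+\; \tfrac{|X^\bot|^2}{w^2} \;-\; \tfrac{\langle \mathbf{H}, X\rangle}{w},
\]
possibly after recombining with the first variation; here the constant ``$1$'' is the hyperbolic analogue of $\operatorname{div}_\Sigma X = 2$ in Euclidean, corrected by the ambient curvature/embedding. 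This is the feature that distinguishes the hyperbolic identity from Simon's Euclidean one: on integration, the ``$1$'' produces the area $|\Sigma|$ appearing on the left-hand side of \eqref{mono}. Inserting this pointwise identity into a Simon-style monotone quantity $I(\rho)$ built from $\Sigma \cap B_\rho(o)$ together with the mean-curvature defect integral, and differentiating in $\rho$, yields a sign-definite derivative --- i.e.\ monotonicity.

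Taking limits, $I(\rho) \to 4k\pi$ as $\rho \to 0$ (the multiplicity hypothesis makes $\Sigma$ look locally like $k$ nearly totally-geodesic disks through $o$), and as $\rho$ exhausts $\mathbb{H}^n$, $I$ recovers $\tfrac{1}{4}\int_\Sigma|\mathbf{H}|^2 - |\Sigma|$. Equating and completing the square on the normal-component integrals assembles exactly \eqref{mono}. The main obstacle I anticipate is in the divergence step: identifying the correct $Y$ so that the pointwise identity collapses cleanly to the displayed form without spurious tangential-projection terms $|X^T|^2$. The other delicate point is obtaining the sharp constant $4k\pi$ in the $\rho \to 0$ limit (rather than the $2k\pi$ that a naive one-shot divergence computation on $X/|X|^2$ would produce even in Euclidean space); as in Simon's original work, this sharper constant arises from integrating the one-parameter monotonicity formula across all radii, not from a single pointwise identity.
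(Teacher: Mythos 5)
Your overall route is the paper's route: apply the first variation formula to a radial test field $Y$ built from $X=\sinh r\,\nabla r$ and a weight $w$, reduce the divergence to $1+|X^{\bot}|^2/w^2$, complete the square against $-\langle Y,\mathbf{H}\rangle$, and take limits at $o$ and at infinity. You have even guessed the correct schematic pointwise identity. But the step you defer (``identifying the correct $Y$'') is the whole content of the proof, and both weights you propose are wrong. Writing $w=\cosh r+c$ (your hyperboloid choice is $c=0$, your Poincar\'e conformal factor is $c=+1$), the two requirements --- that the $|\nabla^{\bot}r|^2$ coefficient be exactly $\sinh^2 r/w^2$ so the square completes, and that $2w\cosh r-w'\sinh r=w^2$ so the area term has coefficient $1$ --- force $w'=\sinh r$ and $c^2=1$. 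Your choice $c=0$ fails the second condition, so the ``$1$'' in your displayed identity is actually $2-\tanh^2 r$ and no clean $|\Sigma|$ emerges. The choice $c=+1$ does satisfy both, but then $1/w$ is bounded at $o$, the boundary term at $\sigma$ vanishes, and you only recover $|\Sigma|\leqslant\frac14\int|\mathbf{H}|^2$ with no $4k\pi$. The unique weight that works is $w(r)=\int_0^r\sinh t\,\mathrm{d}t=\cosh r-1$ (the $c=-1$ solution, a multiple of the volume of the $\mathbb{H}^2$ geodesic ball), whose quadratic vanishing $w\sim\sigma^2/2$ at $o$ is what creates the multiplicity term.

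Relatedly, your diagnosis of the $4k\pi$ versus $2k\pi$ issue is off: no integration of the monotonicity formula over all radii is needed. With $w=\cosh r-1$ the inner boundary term is $2\phi(\sigma)\int_{\Sigma_\sigma}V\approx 2\cdot\frac{2}{\sigma^2}\cdot k\pi\sigma^2=4k\pi$ in a single limit; the factor $4$ is the product of the $2$ from $\operatorname{div}_\Sigma X=2\cosh r$ and the $2$ from $w\sim\sigma^2/2$ (rather than $\sigma^2$). Two smaller remarks: the statement is an exact identity, not an inequality, because in $\mathbb{H}^n$ the Hessian comparison $\operatorname{Hess}r=\coth r\,g_r$ is an equality, so a ``sign-definite derivative'' framing undersells what the computation gives; and the paper avoids your extrinsic Minkowski bookkeeping ($\vec h=\mathbf{H}+2P$) entirely by working intrinsically with $V=\cosh r$, $\nabla V=X$, and the elementary relation $2\phi V+\phi'\sinh r=1$. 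As written, your proposal would not produce \eqref{mono}; with $w=\cosh r-1$ substituted in, it becomes the paper's proof.
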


It easily implies Chen's result {\cite{chen-conformal-1974}} as a corollary.
Also, based on a similar idea, we prove for a surface $\Sigma^2 \subset
\mathbb{S}^n$ the following theorem.

\begin{theorem}
  \label{mono:spheres}Given any closed $\Sigma^2 \subset \mathbb{S}^n$ and $0
  < \sigma < \rho < \pi$, if $o \in \Sigma$ is a point of multiplicity $k
  \geqslant 1$, then
  \begin{align}
    & - 2 \frac{1}{w(\rho)} \int_{\Sigma_{\rho}} \cos r + 4 k \pi - | \Sigma_{\rho}
    |\\
    = & \frac{1}{w(\rho)} \int_{\Sigma_{\rho}} X^{\bot} \cdot \mathbf{H} -
    \int_{\Sigma_{\rho}} \left| \frac{1}{w} X^{\bot} + \frac{1}{2} \mathbf{H}
    \right|^2 + \frac{1}{4} \int_{\Sigma_{\rho}} | \mathbf{H} |^2 .
    \label{crude spheres}
  \end{align}
\end{theorem}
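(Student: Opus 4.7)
The plan is to mimic the derivation of \eqref{mono} in Theorem \ref{thm:mono}, with the spherical weight $w(r) = 1 - \cos r$ in place of the hyperbolic $\cosh r - 1$ and the associated position vector $X := \nabla^{\mathbb{S}^n} w = \sin r \, \partial_r$. Let $r$ denote the geodesic distance to the base point $o \in \Sigma$. I would first record two first-order ingredients on the 2-surface $\Sigma$. The first is the first variation identity
\begin{equation}
  \int_{\Sigma_\rho} 2 \cos r + \int_{\Sigma_\rho} X^\perp \cdot \mathbf{H} = \int_{\partial \Sigma_\rho} X \cdot \nu,
\end{equation}
which comes from $\operatorname{Hess}^{\mathbb{S}^n}(1 - \cos r) = \cos r \cdot g$. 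The second is the derived divergence identity
\begin{equation}
  \operatorname{div}_\Sigma \left( \frac{X}{w} \right) = -1 + \frac{X^\perp \cdot \mathbf{H}}{w} + \frac{|X^\perp|^2}{w^2},
\end{equation}
obtained by a short Leibniz computation together with the elementary relation $|X|^2/w^2 = (2/w) - 1$, itself a consequence of $\sin^2 r = (1 - \cos r)(1 + \cos r)$.

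Next, for $0 < \sigma < \rho < \pi$, I would apply the divergence theorem to the (singular) field $X/w$ over the annulus $\Sigma \cap \{\sigma < r < \rho\}$, on which $X/w$ is smooth. Using the first identity above to rewrite the outer flux at $\partial \Sigma_\rho$ as $w(\rho)^{-1}\bigl(2 \int_{\Sigma_\rho} \cos r + \int_{\Sigma_\rho} X^\perp \cdot \mathbf{H}\bigr)$ produces the $-|\Sigma_\rho|$ and $-2 w(\rho)^{-1} \int_{\Sigma_\rho} \cos r$ contributions on the left of \eqref{crude spheres}. Sending $\sigma \to 0$, the asymptotics $X/w \sim (2/r)\, \partial_r$ near $o$ together with the classical planar identity $\operatorname{div}(2 x / |x|^2) = 4 \pi \delta_0$ applied on each tangent sheet give that the inner flux $w(\sigma)^{-1} \int_{\partial \Sigma_\sigma} X \cdot \nu$ converges to $4 \pi k$ at a point of multiplicity $k$.

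The stated identity \eqref{crude spheres} then follows from the purely algebraic square completion
\[
  -\frac{|X^\perp|^2}{w^2} - \frac{X^\perp \cdot \mathbf{H}}{w} = \frac{|\mathbf{H}|^2}{4} - \left| \frac{X^\perp}{w} + \frac{\mathbf{H}}{2} \right|^2,
\]
which introduces the Willmore integrand on the right-hand side. The main obstacle is the singular analysis at $o$: one must rigorously establish the $4 \pi k$ limit for the inner flux, by an $\varepsilon$-regularization near $o$ and a comparison with the flat tangent sheets through $o$, precisely as in Simon's monotonicity argument in $\mathbb{R}^n$.
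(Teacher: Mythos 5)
Your proposal is correct and follows essentially the same route as the paper: the first variation formula applied to the $1/w$-weighted radial field $X$ with $w=1-\cos r$, the algebraic identity $|X|^2/w^2 = 2/w - 1$ (equivalent to the paper's $2\phi\cos r + \phi'\sin r = -1$), the same completion of the square, and the same $4k\pi$ flux limit at a multiplicity-$k$ point. The only difference is bookkeeping — you integrate $\operatorname{div}_\Sigma\bigl((X/w)^{T}\bigr)$ over the annulus and handle the level-set fluxes explicitly, whereas the paper packages both boundary terms into the single Lipschitz cutoff field $(\phi_\sigma(r)-\phi(\rho))_+X$ — which is immaterial.
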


Since our proofs are based on monotonicity formulas, one can follow the same
philosophy in {\cite{hoffman-sobolev-1974}} and generalize Theorem
\ref{thm:mono} and \ref{mono:spheres} to general Riemannian manifolds with
upper sectional curvature bounds. However, for the purpose of a clear
exposition, we deal only with the two special cases $\mathbb{H}^n$ and
$\mathbb{S}^n$.

{\bfseries{Acknowledgements.}} I would like to thank sincerely my PhD advisor
Prof. Martin Man-chun Li at the Chinese University of HK for continuous
encouragement and support.

\section{Preliminaries}

First, we recall a basic comparison theorem of sectional curvature.

\begin{lemma}
  \label{comparison}(See Theorem 27 of {\cite[Chapter
  5]{petersen-riemannian-1998}}) Assume that $(M^n, g)$ satisfies $\sec
  \leqslant K$, the metric $g$ written in geodesic polar coordinates centered
  at $x \in M$ is $d r^2 + g_r$, then
  \begin{equation}
    \ensuremath{\operatorname{Hess}}_M r \geqslant
    \frac{\ensuremath{\operatorname{sn}}_K'
    (r)}{\ensuremath{\operatorname{sn}}_K (r)} g_r . \label{hess r estimate}
  \end{equation}
  Here, $\ensuremath{\operatorname{sn}}_K$ is defined to be
  \begin{align}
    \ensuremath{\operatorname{sn}}_K (r) & = \frac{1}{\sqrt{K}} \sin (\sqrt{K}
    r) \quad \text{ if } K > 0 ;\\
    \ensuremath{\operatorname{sn}}_K (r) & = r \quad \text{ if } K = 0 ;\\
    \ensuremath{\operatorname{sn}}_K (r) & = \frac{1}{\sqrt{- K}} \sinh
    (\sqrt{- K} r) \quad \text{ if } K < 0.
  \end{align}
    If $K > 0$, the estimate on $\ensuremath{\operatorname{Hess}}_M r$ is only
  valid with $r < \frac{\pi}{\sqrt{K}}$.
\end{lemma}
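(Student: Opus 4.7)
The plan is to derive the inequality from the operator Riccati equation satisfied by the shape operator of geodesic spheres, and then compare it pointwise against the constant-curvature model.

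In geodesic polar coordinates the radial field $\partial_r = \nabla r$ is unit and satisfies $\nabla_{\partial_r}\partial_r = 0$, so $\operatorname{Hess} r(\partial_r,\cdot) = 0$ and it suffices to control $\operatorname{Hess} r$ on the orthogonal complement $(\partial_r)^\perp$. On that complement $\operatorname{Hess} r$ coincides with the shape operator $S := \nabla \partial_r$ of the geodesic spheres $\{r = \mathrm{const}\}$, so the statement reduces to showing $S \geq s_K(r)\operatorname{Id}$, where $s_K := \operatorname{sn}_K'/\operatorname{sn}_K$.

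Differentiating along a radial geodesic, and using a coordinate Jacobi field $Y$ with $[\partial_r, Y] = 0$, I would obtain the operator Riccati equation
\[
  \nabla_{\partial_r} S + S^2 + R_{\partial_r} = 0,
  \qquad R_{\partial_r}(Y) := R(Y,\partial_r)\partial_r.
\]
In the constant-curvature-$K$ model the analogous operator is the scalar $s_K \operatorname{Id}$, and $\operatorname{sn}_K'' + K\operatorname{sn}_K = 0$ is equivalent to the scalar Riccati equation $s_K' + s_K^2 + K = 0$. Setting $T := S - s_K\operatorname{Id}$ and subtracting the two equations yields
\[
  \nabla_{\partial_r} T + T^2 + 2 s_K T = K\operatorname{Id} - R_{\partial_r} \geq 0,
\]
where the hypothesis $\sec \leq K$ enters precisely as the operator inequality $R_{\partial_r} \leq K\operatorname{Id}$ on $(\partial_r)^\perp$.

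The conclusion $T \geq 0$ now follows from a Riccati comparison argument, and this is the step I expect to require the most care: both $S$ and $s_K\operatorname{Id}$ blow up like $r^{-1}\operatorname{Id}$ at $r = 0$, so the initial condition must be imposed asymptotically rather than pointwise. The cleanest way around this is to reformulate everything in terms of Jacobi fields. For a unit $v \in T_xM$ and a unit $X \in v^\perp$, let $J$ be the Jacobi field along $\gamma_v(r) := \exp_x(rv)$ with $J(0) = 0$ and $\nabla_{\partial_r} J(0) = X$; Rauch's theorem then gives that $|J(r)|/\operatorname{sn}_K(r)$ is nondecreasing and tends to $1$ as $r \to 0^+$. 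Since $\operatorname{Hess} r(J, J) = \tfrac{1}{2}\partial_r |J|^2$ and $X \mapsto J(r)$ sweeps out all of $(\partial_r)^\perp$ (throughout the stated range of $r$, where there are no conjugate points by the curvature bound), this monotonicity translates directly into $\operatorname{Hess} r \geq s_K g_r$ on $(\partial_r)^\perp$, which is exactly \eqref{hess r estimate}.
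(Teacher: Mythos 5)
Your argument is correct: the reduction to the shape operator, the Riccati comparison with the scalar solution $s_K=\operatorname{sn}_K'/\operatorname{sn}_K$, and the resolution of the singular initial condition at $r=0$ via Jacobi fields and Rauch's theorem together constitute the standard proof of the Hessian comparison theorem. The paper itself offers no proof of this lemma --- it is quoted verbatim from Petersen --- and your derivation is essentially the one given in that cited source, so there is nothing to fault and no genuine divergence to report.
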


Suppose that the Levi-Civita connection on $(M, g)$ is $\nabla$, then $r (y)
=\ensuremath{\operatorname{dist}}_M (x, y)$ gives a vector field $\nabla r$.
We define $X$ to be the following,
\begin{equation}
  X =\ensuremath{\operatorname{sn}}_K (r) \nabla r.
\end{equation}

Given any 2-surface $\Sigma$ in $(M, g)$, we are concerned with an estimate of
the quantity $\ensuremath{\operatorname{div}}_{\Sigma} X$.

\begin{lemma}
  \label{div X}Given any $x \in \Sigma^2 \subset M^n$ with $\sec \leqslant K$,
  \begin{equation}
    \ensuremath{\operatorname{div}}_{\Sigma} X \geqslant
    2\ensuremath{\operatorname{sn}}'_K (r) .
  \end{equation}
\end{lemma}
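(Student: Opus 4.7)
The plan is to compute $\operatorname{div}_\Sigma X$ directly from the definition $X = \operatorname{sn}_K(r)\nabla r$ and then invoke the Hessian comparison in Lemma \ref{comparison} to bound the resulting terms from below.

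Concretely, I would pick an orthonormal frame $\{e_1,e_2\}$ for $T\Sigma$ at the base point and expand
\begin{equation}
\operatorname{div}_\Sigma X = \sum_{i=1}^2 \langle \nabla_{e_i} X, e_i\rangle
= \sum_{i=1}^2 \bigl[ \operatorname{sn}_K'(r)\, e_i(r)\, \langle \nabla r, e_i\rangle + \operatorname{sn}_K(r)\, \operatorname{Hess} r(e_i,e_i)\bigr],
\end{equation}
using the product rule on $\operatorname{sn}_K(r)$ together with $\nabla_{e_i}\nabla r = \operatorname{Hess} r(e_i,\cdot)^\sharp$. Writing $|\nabla^\Sigma r|^2 := \sum_i e_i(r)^2$, the first piece collapses to $\operatorname{sn}_K'(r)\,|\nabla^\Sigma r|^2$, so the task reduces to estimating the $\Sigma$-trace of $\operatorname{Hess} r$.

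For this I would use Lemma \ref{comparison}, which gives $\operatorname{Hess} r \geq \frac{\operatorname{sn}_K'(r)}{\operatorname{sn}_K(r)} g_r$ on the orthogonal complement of $\nabla r$ (and zero in the $\nabla r$-direction). Evaluating on $e_i$ gives
\begin{equation}
\operatorname{Hess} r(e_i,e_i) \geq \frac{\operatorname{sn}_K'(r)}{\operatorname{sn}_K(r)}\bigl(1 - e_i(r)^2\bigr),
\end{equation}
and summing over $i=1,2$ yields $\sum_i \operatorname{Hess} r(e_i,e_i) \geq \frac{\operatorname{sn}_K'(r)}{\operatorname{sn}_K(r)}(2-|\nabla^\Sigma r|^2)$. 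Since $\operatorname{sn}_K(r)\geq 0$ on the relevant range (unrestricted if $K\leq 0$, and $r<\pi/\sqrt{K}$ if $K>0$), multiplying through is valid, and substituting back the two terms cancel the $|\nabla^\Sigma r|^2$ contributions:
\begin{equation}
\operatorname{div}_\Sigma X \geq \operatorname{sn}_K'(r)|\nabla^\Sigma r|^2 + \operatorname{sn}_K'(r)(2-|\nabla^\Sigma r|^2) = 2\operatorname{sn}_K'(r).
\end{equation}

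There is no real obstacle here — the argument is essentially a two-line bookkeeping exercise once the Hessian comparison is in hand. The only small point demanding care is the sign of $\operatorname{sn}_K(r)$ when multiplying the Hessian inequality, which is exactly why the domain restriction $r<\pi/\sqrt{K}$ in Lemma \ref{comparison} is assumed; aside from that, the computation is purely mechanical and does not require any additional geometric input from $\Sigma$ (in particular, the mean curvature of $\Sigma$ never enters because we are computing the intrinsic divergence of an ambient vector field along $\Sigma$, not the tangential divergence of its tangential projection).
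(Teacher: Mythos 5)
Your proposal is correct and follows essentially the same route as the paper: expand $\operatorname{div}_\Sigma X$ via the product rule into $\operatorname{sn}_K(r)\operatorname{Hess}_M r(e_i,e_i)+\operatorname{sn}_K'(r)|\nabla_{e_i}r|^2$, apply the Hessian comparison of Lemma \ref{comparison} with $g_r(e_i,e_i)=1-e_i(r)^2$, and observe that the tangential-gradient terms cancel to leave $2\operatorname{sn}_K'(r)$. Your explicit remark about the nonnegativity of $\operatorname{sn}_K(r)$ when multiplying through the Hessian inequality is a point the paper leaves implicit, but the argument is the same.
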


\begin{proof}
  Let $\{e_i \}, i = 1, 2$ be a chosen orthonormal frame spanning $T \Sigma$,
  we use the convention of summation over repeated indices, and apply Theorem
  \ref{comparison},
  \begin{align}
    \ensuremath{\operatorname{div}}_{\Sigma} X & = \langle \nabla_{e_i} X, e_i
    \rangle\\
    & = \langle \nabla_{e_i} (\ensuremath{\operatorname{sn}}_K (r) \nabla r),
    e_i \rangle\\
    & = [\ensuremath{\operatorname{sn}}_K (r) \langle \nabla_{e_i} \nabla r,
    e_i \rangle +\ensuremath{\operatorname{sn}}'_K (r) | \nabla e_i r|^2]\\
    & = [\ensuremath{\operatorname{sn}}_K
    (r)\ensuremath{\operatorname{Hess}}_M r (e_i, e_i)
    +\ensuremath{\operatorname{sn}}_K' (r) | \nabla e_i r|^2]\\
    & \geqslant \ensuremath{\operatorname{sn}}_K' (r) [g_r (e_i, e_i) + |
    \nabla e_i r|^2]\\
    & =\ensuremath{\operatorname{sn}}_K' (r) [g_r (e_i, e_i) + | \nabla e_i
    r|^2]\\
    & =\ensuremath{\operatorname{sn}}_K' (r) g (e_i, e_i)\\
    & = 2\ensuremath{\operatorname{sn}}_K' (r) .
  \end{align}
Hence the proof is concluded.
\end{proof}

Although one can obtain the same estimate by Jacobi fields or exponential maps
similar to that of {\cite{hoffman-sobolev-1974}}, using Lemma \ref{comparison}
is much more direct and convenient in our settings.

\section{Willmore type inequalities}

Our main result Theorem \ref{thm:mono} is a finer version of the Willmore type
inequality {\eqref{hyperbolic}}. In its proof we established a monotonicity
formula {\eqref{eq:1}}.

Let $\phi (r)^{- 1} = w (r) = \int_0^r \sinh t \mathrm{d} t$. Note that $w
(r)$ is a constant multiple of the volume of a $\mathbb{H}^2$-geodesic ball.
We see that $\phi' (r) = - \sinh r / w^2 $.

\begin{proof}
  Let $\nabla$ and $\nabla^{\Sigma}$ be respectively the connections on
  $\mathbb{H}^n$ and $\Sigma$. Given any number $\sigma > 0$, we define a
  cutoff version of $\phi$ as $\phi_{\sigma} (r) = \phi (\max \{\sigma, r\})$.
  
  Let $r (x) =\ensuremath{\operatorname{dist}}_{\mathbb{H}^n} (o, x)$ be the
  geodesic distance from $o$ to any point $x \in \mathbb{H}^n$ and
  $\Sigma_{\rho} = \{x \in \Sigma : r (x) < \rho\}$. We choose the Lipschitz
  vector field $Y (x) = (\phi_{\sigma} (r) - \phi (\rho))_+ X$ where $0 <
  \sigma < \rho < \infty$. Let $V (x) = \cosh r (x)
  =\ensuremath{\operatorname{sn}}_1' (r)$, we see that $\nabla V = X$. $V$ is
  called a static potential in general relativity literatures, see Chrusciel
  and Herzlich {\cite{chrusciel-mass-2003}}. We often write $\phi$ instead of
  $\phi (r)$ and similarly for other quantities.
  
  We make use of the first variation formula,
  \begin{equation}
    \int_{\Sigma} \ensuremath{\operatorname{div}}_{\Sigma} Y = - \int_{\Sigma}
    \langle Y, \mathbf{H} \rangle, \label{1st var}
  \end{equation}
  where $\mathbf{H}$ is the mean curvature vector.
  
  We calculate $\ensuremath{\operatorname{div}}_{\Sigma} Y$ first. By Lemma
  \ref{div X},
  \begin{equation}
    \ensuremath{\operatorname{div}}_{\Sigma} Y = 2 (\phi_{\sigma} - \phi
    (\rho)) V + \sinh r | \nabla^{\Sigma} r|^2 ((\phi_{\sigma} - \phi
    (\rho))_+)' .
  \end{equation}
  Note that $| \nabla^{\bot} r|$, the length of $\nabla r$ along normal
  direction of $\Sigma$ is $| \nabla^{\bot} r|^2 = 1 - | \nabla^{\Sigma}
  r|^2$. Integrating $\ensuremath{\operatorname{div}}_{\Sigma} Y$ over
  $\Sigma$ then gives
  \begin{align}
    \int_{\Sigma} \ensuremath{\operatorname{div}}_{\Sigma} Y = & - 2 \phi
    (\rho) \int_{\Sigma_{\rho}} V + 2 \phi (\sigma) \int_{\Sigma_{\sigma}} V +
    2 \int_{\Sigma_{\rho, \sigma}} \phi V\\
    & \quad + \int_{\Sigma_{\rho} \backslash \Sigma_{\sigma}} \phi' (r) \sinh
    r (1 - | \nabla^{\bot} r|^2)\\
    = & - 2 \phi (\rho) \int_{\Sigma_{\rho}} V + 2 \phi (\sigma)
    \int_{\Sigma_{\sigma}} V\\
    & \quad + | \Sigma_{\rho} \backslash \Sigma_{\sigma} | -
    \int_{\Sigma_{\rho} \backslash \Sigma_{\sigma}} \phi' (r) \sinh r |
    \nabla^{\bot} r|^2 \label{eq:1},
  \end{align}
where we have used a consequence of \ a simple calculus,
  \begin{equation}
    2 \phi V + \phi' \sinh r = 1. \label{eq:2}
  \end{equation}
  Since $\mathbf{H}$ is a vector normal to $\Sigma$,
  \begin{align}
    - \int_{\Sigma} \langle Y, \mathbf{H} \rangle = & - \int_{\Sigma_{\rho}}
    (\phi_{\sigma} (r) - \phi (\rho))_+ \sinh r \nabla r \cdot \mathbf{H}
    \label{raw}\\
    = & \phi (\rho) \int_{\Sigma_{\rho}} \sinh r \nabla^{\bot} r \cdot
    \mathbf{H} - \phi (\sigma) \int_{\Sigma_{\sigma}} \sinh r \nabla^{\bot} r
    \cdot \mathbf{H}\\
    & \quad - \int_{\Sigma_{\rho} \backslash \Sigma_{\sigma}} \phi \sinh r
    \nabla^{\bot} r \cdot \mathbf{H} \label{eq:3} .
  \end{align}
  We then have
  \begin{align}
    & - \phi \sinh r \nabla^{\bot} r \cdot \mathbf{H} + \phi' \sinh r |
    \nabla^{\bot} r|^2\\
    = & - \frac{1}{w} X^{\bot} \cdot \mathbf{H} - \frac{|X^{\bot} |^2}{w^2}\\
    = & - \left| \frac{1}{w} X^{\bot} + \frac{1}{2} \mathbf{H} \right|^2 +
    \frac{1}{4} | \mathbf{H} |^2 \label{eq:4},
  \end{align}
  where $X^{\bot} = \sinh r \nabla^{\bot} r$ is the normal component to
  $\Sigma$ of the vector field $X$.
  
  To collect {\eqref{eq:1}}, {\eqref{eq:3}} and {\eqref{eq:4}}, one has
  \begin{align}
    & - 2 \phi (\rho) \int_{\Sigma_{\rho}} V + 2 \phi (\sigma)
    \int_{\Sigma_{\sigma}} V + | \Sigma_{\rho} \backslash \Sigma_{\sigma} |\\
    = & \phi (\rho) \int_{\Sigma_{\rho}} \sinh r \nabla^{\bot} r \cdot
    \mathbf{H} - \phi (\sigma) \int_{\Sigma_{\sigma}} \sinh r \nabla^{\bot} r
    \cdot \mathbf{H}\\
    & \quad - \int_{\Sigma_{\rho} \backslash \Sigma_{\sigma}} \left|
    \frac{1}{w} X^{\bot} + \frac{1}{2} \mathbf{H} \right|^2 + \frac{1}{4}
    \int_{\Sigma_{\rho} \backslash \Sigma_{\sigma}} | \mathbf{H} |^2 .
    \label{crude}
  \end{align}
  
  Then {\eqref{crude}} is the monotonicity formula we desired. Since $\Sigma$
  is closed, letting $\rho \to + \infty$ and $\sigma \to 0$, the above greatly
  simplifies as
  \begin{equation}
    | \Sigma | + 2 \lim_{\sigma \to 0} \phi (\sigma) \int_{\Sigma_{\sigma}} V
    = - \int_{\Sigma} \left| \frac{1}{w} X^{\bot} + \frac{1}{2} \mathbf{H}
    \right|^2 + \frac{1}{4} \int_{\Sigma} | \mathbf{H} |^2 .
  \end{equation}
  Since $\Sigma_{\sigma}$ is locally Euclidean with multiplicity $k$ at the point $o \in
  \Sigma$ and $V (0) = 1$, the fact that the limit
  \begin{align}
    \lim_{\sigma \to 0} \phi (\sigma) \int_{\Sigma_{\sigma}} V & =
    \lim_{\sigma \to 0} \frac{\pi \sigma^2}{\int_0^{\sigma} \sinh t \mathrm{d}
    t} \cdot \left( \frac{1}{\pi \sigma^2} \int_{\Sigma_{\sigma}} V \right)\\
    & = 2 \pi \lim_{\sigma \to 0} \frac{| \Sigma_{\sigma} |}{\pi \sigma^2} =
    2 k \pi \label{multiplicity}
  \end{align}
exists will give finally {\eqref{mono}}.
\end{proof}
If $\Sigma$ has multiplicity greater than 1 somewhere, {\eqref{hyperbolic}} is
similar to that of Li and Yau {\cite{li-new-1982}} when a point in $\Sigma$ is
covered multiple times by the immersion. In fact, we have the following
interesting result analogous to {\cite[Theorem 6]{li-new-1982}},

\begin{corollary}
  \label{cor:criterion}Give any closed 2-surface $\Sigma^2 \subset
  \mathbb{H}^n$,
  \begin{equation}
    \frac{1}{4} \int_{\Sigma} | \mathbf{H} |^2 < | \Sigma | + 8 \pi,
    \label{criterion}
  \end{equation}
  then $\Sigma$ is embedded in $\mathbb{H}^n$.
\end{corollary}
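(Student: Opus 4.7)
The plan is to argue by contraposition, directly exploiting the identity in Theorem \ref{thm:mono}. Suppose $\Sigma$ is not embedded in $\mathbb{H}^n$; then the immersion has a self-intersection, so there exists a point $o \in \Sigma$ whose multiplicity $k$ satisfies $k \geqslant 2$.

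Apply Theorem \ref{thm:mono} at this point $o$. The identity \eqref{mono} reads
\begin{equation}
|\Sigma| + 4k\pi = -\int_{\Sigma} \left|\tfrac{1}{w}X^{\bot} + \tfrac{1}{2}\mathbf{H}\right|^2 + \tfrac{1}{4}\int_{\Sigma}|\mathbf{H}|^2.
\end{equation}
Since the first term on the right-hand side is nonpositive, dropping it gives the bound
\begin{equation}
\tfrac{1}{4}\int_{\Sigma}|\mathbf{H}|^2 \geqslant |\Sigma| + 4k\pi \geqslant |\Sigma| + 8\pi,
\end{equation}
using $k \geqslant 2$ in the last step. This directly contradicts the hypothesis \eqref{criterion}, so $\Sigma$ must be embedded.

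There is essentially no obstacle here beyond correctly invoking Theorem \ref{thm:mono} at a self-intersection point; the strict inequality in \eqref{criterion} is exactly what is needed to rule out multiplicity $k=2$, in perfect analogy with the Li--Yau embeddedness criterion \cite{li-new-1982}. One could equivalently phrase the proof as: the minimum multiplicity-weighted quantity $|\Sigma| + 4k\pi$ over all points of $\Sigma$ is a lower bound for $\tfrac{1}{4}\int_{\Sigma}|\mathbf{H}|^2$, so if this integral is strictly less than $|\Sigma| + 8\pi$ then $k=1$ everywhere, i.e.\ the immersion is injective.
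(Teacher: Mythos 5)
Your proof is correct and follows essentially the same route as the paper: both argue by contraposition, noting that a self-intersection gives a point of multiplicity $k \geqslant 2$, then apply the identity \eqref{mono} (equivalently the limit \eqref{multiplicity}) and drop the nonpositive term to get $\frac{1}{4}\int_{\Sigma}|\mathbf{H}|^2 \geqslant |\Sigma| + 8\pi$, contradicting the strict inequality \eqref{criterion}. No issues.
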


\begin{proof}
  If $\Sigma$ is of multiplicity at least two somewhere $x \in \Sigma$, then
  by {\eqref{multiplicity}},
  \begin{equation}
    \frac{1}{4} \int_{\Sigma} | \mathbf{H} |^2 \geqslant | \Sigma | + 8 \pi,
  \end{equation}
  but this contradicts with {\eqref{criterion}}. Hence, $\Sigma$ has to be
  embedded.
\end{proof}

When $\Sigma^2 \subset \mathbb{H}^n$ is a surface with boundary, one should
apply the first variation formula with boundary,
\begin{equation}
  \int_{\Sigma} \ensuremath{\operatorname{div}}_{\Sigma} Y = - \int_{\Sigma}
  \langle Y, \mathbf{H} \rangle + \int_{\partial \Sigma} \langle Y, \eta
  \rangle, \label{1st variation with boundary}
\end{equation}
where $\eta$ is the outward pointing normal of $\partial \Sigma$ to $\Sigma$.
We prove similarly,

\begin{corollary}
  Suppose that $\Sigma$ is a 2-surface with nonempty boundary, if $o \in
  \Sigma$ is an interior point of $\Sigma$, then
  \begin{equation}
    | \Sigma | + 4 \pi \leqslant \int_{\partial \Sigma} \langle \frac{1}{w} X
    {,} \eta \rangle - \int_{\Sigma} \left| \frac{1}{w} X^{\bot} + \frac{1}{2}
    \mathbf{H} \right|^2 + \frac{1}{4} \int_{\Sigma} | \mathbf{H} |^2 ;
    \label{mono with boundary}
  \end{equation}
  and if $o \in \partial \Sigma$ is a boundary point of $\Sigma$, then
  \begin{equation}
    | \Sigma | + 2 \pi \leqslant \int_{\partial \Sigma} \langle \frac{1}{w} X
    {,} \eta \rangle - \int_{\Sigma} \left| \frac{1}{w} X^{\bot} + \frac{1}{2}
    \mathbf{H} \right|^2 + \frac{1}{4} \int_{\Sigma} | \mathbf{H} |^2 .
    \label{mono with boundary point}
  \end{equation}
\end{corollary}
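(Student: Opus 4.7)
The plan is to mirror the proof of Theorem \ref{thm:mono}, substituting the first variation formula with boundary \eqref{1st variation with boundary} for \eqref{1st var}. Applied to the same Lipschitz vector field $Y = (\phi_\sigma(r) - \phi(\rho))_+ X$, the computation of $\operatorname{div}_\Sigma Y$ and the local identities \eqref{eq:2} and \eqref{eq:4} go through unchanged, so the monotonicity identity \eqref{crude} simply acquires an extra term $+\int_{\partial\Sigma}\langle Y,\eta\rangle$ on the right-hand side. I would then choose $\rho$ larger than $\max_\Sigma r$ so that $\Sigma_\rho = \Sigma$, and send $\rho \to \infty$; since $w(\rho)$ grows exponentially, $\phi(\rho) \to 0$ and all $\phi(\rho)$-terms drop out, effectively replacing $Y$ by $\phi_\sigma(r) X$.

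For the interior-point case I would pick $\sigma < \operatorname{dist}(o,\partial\Sigma)$, so that $\partial\Sigma \cap B_\sigma(o) = \emptyset$; on $\partial\Sigma$ one then has $r \geq \sigma$, hence $\phi_\sigma(r) = \phi(r) = 1/w(r)$, and the boundary integrand is exactly $\langle\tfrac{1}{w}X,\eta\rangle$ independently of $\sigma$. Sending $\sigma \to 0$, the tangent-plane argument of \eqref{multiplicity} gives $2\phi(\sigma)\int_{\Sigma_\sigma} V \to 4k\pi$, where $k \geq 1$ is the multiplicity of $\Sigma$ at $o$, and $4k\pi \geq 4\pi$ then yields \eqref{mono with boundary}.

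For the boundary-point case the density argument is modified: at a smooth boundary point of multiplicity $k$, $\Sigma_\sigma$ looks locally like a half-disk (counted $k$ times), so $|\Sigma_\sigma|/(\pi\sigma^2) \to k/2$ and $2\phi(\sigma)\int_{\Sigma_\sigma}V \to 2k\pi \geq 2\pi$. The main obstacle is handling the boundary piece $\int_{\partial\Sigma\cap B_\sigma(o)}\langle\phi(\sigma)X,\eta\rangle$ in the limit, because a crude bound using $\phi(\sigma) = O(\sigma^{-2})$, $|X|\leq\sinh\sigma = O(\sigma)$ and arclength $O(\sigma)$ only gives $O(1)$. The cancellation is geometric: $\eta$ is the conormal of $\partial\Sigma$ in $\Sigma$, so $\eta(o) \perp T_o\partial\Sigma$, while for $p\in\partial\Sigma$ near $o$ the radial direction $\nabla r(p)$ agrees to leading order with the unit tangent of $\partial\Sigma$ at $o$; hence $\langle\nabla r,\eta\rangle = O(r)$ and $\langle X,\eta\rangle = \sinh(r)\langle\nabla r,\eta\rangle = O(r^2)$ along $\partial\Sigma$ near $o$. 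Parametrizing $\partial\Sigma$ by arc length $s$, $\phi(\sigma)\langle X,\eta\rangle = O(s^2/\sigma^2)$ integrates to $O(\sigma)$ over $|s|\leq\sigma$, which vanishes as $\sigma\to 0$. What remains converges to the (improper but convergent) integral $\int_{\partial\Sigma}\langle\tfrac{1}{w}X,\eta\rangle$, and \eqref{mono with boundary point} follows.
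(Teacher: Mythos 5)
Your proposal is correct and follows exactly the route the paper intends: rerun the proof of Theorem \ref{thm:mono} with the first variation formula with boundary \eqref{1st variation with boundary}, let $\rho \to \infty$ and $\sigma \to 0$, and use the density lower bounds $4k\pi \geqslant 4\pi$ (interior point) and $2k\pi \geqslant 2\pi$ (boundary point, half-disk density). The paper gives no details beyond ``we prove similarly,'' so your treatment of the boundary-point case --- in particular the observation that $\langle \nabla r, \eta \rangle = O(r)$ along $\partial\Sigma$ near $o$, which makes the near-$o$ boundary contribution vanish as $\sigma \to 0$ --- supplies a step the paper leaves implicit.
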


We now give an estimate of $\frac{1}{4} \int \mathbf{H}^2$ in the same fashion
as {\cite[Eq. (16.31]{gilbarg-elliptic-1983}} and arrive the following.

\begin{theorem}
  \label{different estimate}Given any closed 2-surface in $\mathbb{H}^n$ and a
  real number $\rho > 0$, then
  \begin{equation}
    4 \pi + | \Sigma_{\rho} | \leqslant \frac{1}{w (\rho)}
    \int_{\Sigma_{\rho}} V + \frac{1}{4} \int_{\Sigma} | \mathbf{H} |^2 .
    \label{finer}
  \end{equation}
\end{theorem}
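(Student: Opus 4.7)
The plan is to re-run the monotonicity argument behind Theorem~\ref{thm:mono}, but keep $\rho$ finite instead of sending it to infinity. Choose $o$ to be any point of $\Sigma$, so that the multiplicity $k$ at $o$ satisfies $k\geq 1$. Taking $\sigma\to 0$ in the monotonicity formula \eqref{crude} and invoking the limit \eqref{multiplicity} yields the identity
\[
|\Sigma_\rho| + 4k\pi = \tfrac{2}{w(\rho)}\int_{\Sigma_\rho}V + \tfrac{1}{w(\rho)}\int_{\Sigma_\rho}X^\bot\cdot\mathbf{H} - \int_{\Sigma_\rho}\Bigl|\tfrac{X^\bot}{w(r)}+\tfrac{\mathbf{H}}{2}\Bigr|^2 + \tfrac{1}{4}\int_{\Sigma_\rho}|\mathbf{H}|^2.
\]
Expanding the square shows that the two $\tfrac{1}{4}|\mathbf{H}|^2$ contributions cancel and the linear-in-$\mathbf{H}$ pieces combine through $\psi(r)=\tfrac{1}{w(r)}-\tfrac{1}{w(\rho)}\geq 0$, leaving the cleaner form
\[
|\Sigma_\rho| + 4k\pi = \tfrac{2}{w(\rho)}\int_{\Sigma_\rho}V - \int_{\Sigma_\rho}\psi\,X^\bot\cdot\mathbf{H} - \int_{\Sigma_\rho}\tfrac{|X^\bot|^2}{w(r)^2}.
\]

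The second step is a Young inequality designed to cancel the $|X^\bot|^2/w(r)^2$ term. With $a=|X^\bot|/w(r)$ and $b=\psi\,w(r)\,|\mathbf{H}|$, the bound $ab\leq a^2+\tfrac{b^2}{4}$ gives
$-\psi\,X^\bot\cdot\mathbf{H}\leq \tfrac{|X^\bot|^2}{w(r)^2}+\tfrac{(\psi w(r))^2}{4}|\mathbf{H}|^2$. Since $\psi(r)\,w(r)=1-w(r)/w(\rho)\in[0,1]$ on $\Sigma_\rho$, the Willmore coefficient stays $\leq\tfrac{1}{4}$ and the $|X^\bot|^2/w(r)^2$ piece cancels exactly. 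Combined with $k\geq 1$ and $\int_{\Sigma_\rho}|\mathbf{H}|^2\leq\int_\Sigma|\mathbf{H}|^2$, this yields the intermediate bound
\[
4\pi + |\Sigma_\rho| \leq \tfrac{2}{w(\rho)}\int_{\Sigma_\rho}V + \tfrac{1}{4}\int_\Sigma|\mathbf{H}|^2.
\]

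To sharpen the coefficient from $\tfrac{2}{w(\rho)}$ to the $\tfrac{1}{w(\rho)}$ appearing in \eqref{finer}, the cleanest route is to combine the above with Chen's inequality \eqref{hyperbolic}: since $|\Sigma_\rho|\leq|\Sigma|$ and $\tfrac{1}{w(\rho)}\int_{\Sigma_\rho}V\geq 0$,
\[
4\pi+|\Sigma_\rho| \leq 4\pi+|\Sigma| \leq \tfrac{1}{4}\int_\Sigma|\mathbf{H}|^2 \leq \tfrac{1}{w(\rho)}\int_{\Sigma_\rho}V + \tfrac{1}{4}\int_\Sigma|\mathbf{H}|^2.
\]

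The main obstacle I anticipate is precisely this factor-of-two mismatch: the monotonicity argument naturally inherits the $2$ from $\operatorname{div}_\Sigma X=2V$, and halving that coefficient purely within the monotonicity framework would seem to require either a different test vector field (perhaps of the form $f(r)\nabla V$ with a tangential corrector) or a more delicate reorganisation of the cross terms exploiting the hyperbolic identity $2w(r)V-w(r)^2=\sinh^2 r$. The route above sidesteps this by appealing to \eqref{hyperbolic} at the last step.
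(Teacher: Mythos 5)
Your derivation up to the intermediate bound $4\pi + |\Sigma_\rho|\leqslant\frac{2}{w(\rho)}\int_{\Sigma_\rho}V + \frac{1}{4}\int_\Sigma|\mathbf{H}|^2$ is essentially the paper's own proof: the paper passes from \eqref{finer1} to the pointwise claim \eqref{finer2}, proved by splitting into the cases $X\cdot\mathbf{H}\geqslant 0$ and $X\cdot\mathbf{H}\leqslant 0$ and completing the square, and your Young-inequality argument with the weight $\psi(r)=\frac{1}{w(r)}-\frac{1}{w(\rho)}$ is an equivalent (arguably cleaner) packaging of exactly that estimate, since $(\phi_\sigma-\phi(\rho))_+X\cdot\mathbf{H}=\psi\,X^{\bot}\cdot\mathbf{H}$ on $\Sigma_\rho\setminus\Sigma_\sigma$. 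The factor-of-two mismatch you flag at the end is genuine, and the paper does not resolve it either: its final display still carries $-2\phi(\rho)\int_{\Sigma_\rho}V$ on the left, so letting $\sigma\to 0$ yields your coefficient-$2$ inequality, not \eqref{finer} as stated; the ``immediately'' in the paper glosses over this, and the statement of Theorem~\ref{different estimate} is most plausibly a typo for the coefficient $\frac{2}{w(\rho)}$. Your patch via Chen's inequality \eqref{hyperbolic} is logically valid for the literal statement, but be aware of what it costs: since $|\Sigma_\rho|\leqslant|\Sigma|$ and $\frac{1}{w(\rho)}\int_{\Sigma_\rho}V\geqslant 0$, the chain $4\pi+|\Sigma_\rho|\leqslant 4\pi+|\Sigma|\leqslant\frac14\int_\Sigma|\mathbf{H}|^2$ shows that \eqref{finer} as printed is a trivial weakening of \eqref{hyperbolic} and carries no localized information, whereas the point of this theorem (modelled on the Gilbarg--Trudinger monotonicity estimate) is precisely the local density-type content that your coefficient-$2$ derivation retains. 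So I would present the monotonicity argument as the proof, state the conclusion with $\frac{2}{w(\rho)}$ (or keep $\frac14\int_{\Sigma_\rho}|\mathbf{H}|^2$ on the right, which your argument also gives), and not downgrade to the appeal to \eqref{hyperbolic} in your last step.
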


\begin{proof}
  The proof goes as before with some modifications. From {\eqref{1st var}},
  {\eqref{raw}}, {\eqref{eq:1}} and {\eqref{eq:2}}, and also using the
  shorthand $X = \sinh r \nabla r$ we have
  \begin{align}
    - 2 \phi (\rho) \int_{\Sigma_{\rho}} V & + 2 \phi (\sigma)
    \int_{\Sigma_{\sigma}} V + | \Sigma_{\rho} \backslash \Sigma_{\sigma} |\\
    = & - \int_{\Sigma_{\rho}} (\phi_{\sigma} - \phi (\rho))_+ X \cdot
    \mathbf{H} - \int_{\Sigma_{\rho} \backslash \Sigma_{\sigma}}
    \frac{|X^{\bot} |^2}{w^2} \label{finer1},
  \end{align}
  
  For any $x \in \Sigma_{\rho} \backslash \Sigma_{\sigma}$, we claim the
  following
  \begin{equation}
    - (\phi_{\sigma} - \phi (\rho))_+ X \cdot \mathbf{H} - \frac{|X^{\bot}
    |^2}{w^2} \leqslant \frac{1}{4} | \mathbf{H} |^2 . \label{finer2}
  \end{equation}
  Then from {\eqref{finer1}} and {\eqref{finer2}}, we have
  \begin{align}
    - 2 \phi (\rho) \int_{\Sigma_{\rho}} V & + 2 \phi (\sigma)
    \int_{\Sigma_{\sigma}} V + | \Sigma_{\rho} \backslash \Sigma_{\sigma} |\\
    = & - \int_{\Sigma_{\sigma}} (\phi_{\sigma} - \phi (\rho))_+ X \cdot
    \mathbf{H} + \frac{1}{4} \int_{\Sigma_{\rho} \backslash \Sigma_{\sigma}} |
    \mathbf{H} |^2\\
    \leqslant & - \int_{\Sigma_{\sigma}} (\phi_{\sigma} - \phi (\rho))_+ X
    \cdot \mathbf{H} + \frac{1}{4} \int_{\Sigma} | \mathbf{H} |^2 .
  \end{align}
  By letting $\sigma \to 0$, we obtain {\eqref{finer}} immediately.
  
  Now we turn to the proof of the claimed estimate {\eqref{finer2}}. Indeed,
  when $X \cdot \mathbf{H} \geqslant 0$, the inequality is trivial. When $X
  \cdot \mathbf{H} \leqslant 0$, we have similar to {\eqref{eq:4}} that
  \begin{align}
    - (\phi_{\sigma} - \phi (\rho))_+ & X \cdot \mathbf{H} - \frac{|X^{\bot}
    |^2}{w^2}\\
    = & - \left| \frac{1}{w} X^{\bot} + \frac{1}{2} \mathbf{H} \right|^2 +
    \frac{1}{4} \mathbf{H}^2 + \phi (\rho) X \cdot \mathbf{H}\\
    \leqslant & \frac{1}{4} | \mathbf{H} |^2 .
  \end{align}
  Hence the proof is concluded.
\end{proof}

Now we turn to the case of surfaces in an $n$-sphere $\mathbb{S}^n$.

\begin{proof}[Proof of Theorem \ref{mono:spheres}]
  Let $r (x) =\ensuremath{\operatorname{dist}}_{\mathbb{S}^n} (x, o),$ $X =
  \sin r \nabla r$, $\phi (r)^{- 1} = w (r) = \int_0^r \sin t \mathrm{d} t$.
  We proceed similarly as the proof of Theorem \ref{thm:mono} noting the
  relation
  \begin{equation}
    2 \phi \cos r + \phi' \sin r = - 1.
  \end{equation}
  
\end{proof}

Similar to the proof of Theorem \ref{different estimate}, we have

\begin{theorem}
  \label{different estimate spheres}Given any closed 2-surface in
  $\mathbb{S}^n$ and a real number $0 < \rho < \pi$, then
  \begin{equation}
    4 \pi - | \Sigma_{\rho} | \leqslant \frac{1}{w (\rho)}
    \int_{\Sigma_{\rho}} \cos r + \frac{1}{4} \int_{\Sigma} | \mathbf{H} |^2 .
    \label{finer spheres}
  \end{equation}
\end{theorem}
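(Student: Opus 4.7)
The plan is to mirror the proof of Theorem \ref{different estimate} step for step, making only the substitutions dictated by passing from $\mathbb{H}^n$ to $\mathbb{S}^n$: replace $\sinh r, \cosh r$ by $\sin r, \cos r$, set $X = \sin r\,\nabla r$ and $w(r) = 1 - \cos r$, and use the spherical identity $2\phi\cos r + \phi'\sin r = -1$ already recorded in the proof of Theorem \ref{mono:spheres} in place of \eqref{eq:2}. As before, we fix a point $o\in\Sigma$, take it to have multiplicity one, and test the first variation formula \eqref{1st var} against the Lipschitz vector field $Y = (\phi_\sigma - \phi(\rho))_+ X$ for $0 < \sigma < \rho < \pi$.

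The first step is to derive the spherical counterpart of \eqref{finer1}. Since $\mathbb{S}^n$ has constant sectional curvature, Lemma \ref{div X} holds with equality and yields $\operatorname{div}_\Sigma X = 2\cos r$. Expanding $\operatorname{div}_\Sigma Y$ and integrating exactly as in the proof of Theorem \ref{thm:mono}, but invoking the sign-reversed identity $2\phi\cos r + \phi'\sin r = -1$ instead of \eqref{eq:2}, yields a formula that differs from \eqref{finer1} only in that $V$ is replaced by $\cos r$ and $|\Sigma_\rho\setminus\Sigma_\sigma|$ appears with the opposite sign. This sign flip is precisely what turns the $+|\Sigma_\rho|$ in the hyperbolic estimate \eqref{finer} into the $-|\Sigma_\rho|$ appearing in \eqref{finer spheres}.

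The second step is to prove the pointwise claim analogous to \eqref{finer2}: for $x\in\Sigma_\rho\setminus\Sigma_\sigma$,
\begin{equation}
-(\phi_\sigma - \phi(\rho))_+\,X\cdot\mathbf{H} - \frac{|X^{\bot}|^2}{w^2} \leqslant \frac{1}{4}|\mathbf{H}|^2.
\end{equation}
The argument from Theorem \ref{different estimate} transfers verbatim: when $X\cdot\mathbf{H}\geqslant 0$ the left-hand side is already nonpositive, and when $X\cdot\mathbf{H}\leqslant 0$ the completion-of-squares identity \eqref{eq:4}, which has the same algebraic form in both ambients (only $\sinh r$ is swapped for $\sin r$), reduces the inequality to the obvious bound $\phi(\rho)\,X\cdot\mathbf{H}\leqslant 0$.

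Combining the two steps, discarding the nonpositive term $-\int_{\Sigma_\rho\setminus\Sigma_\sigma}|X^\bot/w + \mathbf{H}/2|^2$, and letting $\sigma\to 0$ delivers \eqref{finer spheres}. The limit $\phi(\sigma)\int_{\Sigma_\sigma}\cos r \to 2\pi$ follows from the same computation as \eqref{multiplicity}, since $1-\cos\sigma$ and $\cosh\sigma - 1$ are both asymptotic to $\sigma^2/2$ and both $\cos 0$ and $\cosh 0$ equal $1$; the residual $-\int_{\Sigma_\sigma}(\phi_\sigma - \phi(\rho))_+\,X\cdot\mathbf{H}$ vanishes by Cauchy--Schwarz together with $|X|\leqslant \sin\sigma$ and $|\Sigma_\sigma| = O(\sigma^2)$. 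The main hazard in executing the plan is sign bookkeeping: the spherical trigonometric identity carries a minus where the hyperbolic one carries a plus, and $\nabla(\cos r) = -X$ rather than $+X$, so I would cross-check each sign against the structure of \eqref{crude spheres} (already established in Theorem \ref{mono:spheres}) before taking the limit.
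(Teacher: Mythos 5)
Your proposal is correct and follows exactly the route the paper intends: the paper's entire proof of Theorem \ref{different estimate spheres} is the remark that it is ``similar to the proof of Theorem \ref{different estimate}'', and your sign bookkeeping with the spherical identity $2 \phi \cos r + \phi' \sin r = - 1$, the equality case of Lemma \ref{div X}, and the pointwise estimate analogous to \eqref{finer2} fills in precisely those details. The only caveat, inherited from the paper's own template rather than introduced by you, is that the limit $2 \phi (\sigma) \int_{\Sigma_{\sigma}} \cos r \to 4 \pi$ leaves the coefficient $2 / w (\rho)$ in front of $\int_{\Sigma_{\rho}} \cos r$ rather than the $1 / w (\rho)$ stated in \eqref{finer spheres}; the identical factor-of-two discrepancy occurs in the passage from \eqref{finer1} to \eqref{finer}, so your argument reproduces what the paper's proof actually yields.
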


\begin{remark}
  The reason that we do not take a limit $\rho \to \infty$ is the existence of
  conjugate points in $\mathbb{S}^n$.
\end{remark}

\section{Equality case of Corollary \ref{thm:hyperbolic}}

Now we turn to discuss the equality case of Corollary \ref{thm:hyperbolic}. We
recall some basics of the hyperboloid model of the hyperbolic space
$\mathbb{H}^n$. The readers can find relevant materials in {\cite[Chapter
3]{petersen-riemannian-1998}}. $\mathbb{H}^n$ can be realized as a
pseudo-sphere in Minkowski space i.e.
\begin{equation}
  \mathbb{H}^n = \{x = (x_0, x_1, \ldots, x_n) \in \mathbb{R}^{1, n} : \langle
  x, x \rangle = - 1, x_0 > 0\},
\end{equation}
where the bilinear form $\langle \cdot, \cdot \rangle$ is defined to be
\begin{equation}
  \langle x, y \rangle = - x_0 y_0 + x_1 y_1 + \cdots + x_n y_n .
\end{equation}
The notation $\langle \cdot, \cdot \rangle$ is still used because of no
confusion caused. By differentiating the relation $\langle x, x \rangle = -
1$, we find that the tangent space $T_x \mathbb{H}^n$ at $x \in \mathbb{H}^n$
is $\{y \in \mathbb{R}^{1, n} : \langle x, y \rangle = 0\}$. By restricting
$\langle \cdot, \cdot \rangle$ to $T_x \mathbb{H}^n$, we get the standard
Riemannian metric $\langle \cdot, \cdot \rangle_{T_x \mathbb{H}^n}$ on
$\mathbb{H}^n$. The geodesic passing through $x \in \mathbb{H}^n$ with unit
velocity $z \in T_x \mathbb{H}^n$ is
\begin{equation}
  x (\rho) = x \cosh \rho + z \sinh \rho, \quad \rho \in (- \infty, \infty) .
  \label{geodesic}
\end{equation}
If we know the endpoints $x \neq y$ of a geodesic segment, we can solve $\rho
= \rho (x, y)$ and $z : = z (x, y) \in T_x \mathbb{H}^n$,
\begin{equation}
  \langle x, y \rangle = - \cosh \rho, z = \sinh^{- 1} \rho (y + x \langle x,
  y \rangle) . \label{velocity and distance}
\end{equation}
\begin{theorem}
  The equality holds in {\eqref{mono}} if and only if $\Sigma$ lies in a
  $\mathbb{H}^3$ subspace as a geodesic 2-sphere.
\end{theorem}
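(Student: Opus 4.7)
My plan is first to read ``equality in \eqref{mono}'' as equality in the Willmore-type bound $\tfrac14\int_\Sigma|\mathbf{H}|^2 \geq |\Sigma| + 4k\pi$ that follows from dropping the non-negative integral term on the right-hand side of \eqref{mono}. Such equality forces $k = 1$ (so $\Sigma$ is embedded, by Corollary~\ref{cor:criterion}) and the pointwise identity
\[
\mathbf{H} = -\frac{2}{w(r)} X^\bot \quad\text{on } \Sigma.
\]
Because $|\Sigma|$ and $\int|\mathbf{H}|^2$ do not depend on the base point while \eqref{mono} is valid for every $o \in \Sigma$, this pointwise identity must in fact hold simultaneously for every choice of $o \in \Sigma$. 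The ``if'' direction is a direct check in the hyperboloid model: for a geodesic $2$-sphere of radius $R$ in a totally geodesic $\mathbb{H}^3 \ni o$, parameterize $\Sigma$ by $\omega \in S^2$; then $w(r) = \sinh^2 R\,(1 - \omega\cdot\omega_0)$ and $|X^\bot| = \sinh R\cosh R\,(1 - \omega\cdot\omega_0)$, and the claimed identity reduces to $|\mathbf{H}| = 2\coth R$.

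For the converse, I use the hyperboloid embedding $\mathbb{H}^n \subset \mathbb{R}^{1,n}$ of Section~4 and write $V = \cosh r = -\langle o, x\rangle$ and $X = -o + V x = \nabla V$. Combining $\Delta_\Sigma x = \mathbf{H} + 2x$ (the $2x$ coming from the mean curvature $n x$ of $\mathbb{H}^n$ in $\mathbb{R}^{1,n}$ traced over $T\Sigma$) with the equality condition and multiplying by $w = V-1$ yields the $\mathbb{R}^{1,n}$-valued equation
\[
(V - 1)\Delta_\Sigma x + 2(x - o) = 2 X^T = 2\nabla^\Sigma V.
\]
Contracting against a constant vector $\xi \in T_o\mathbb{H}^n$ with $\xi \perp T_o\Sigma$, so that $\langle \xi, o\rangle = 0$ and $\langle \xi, e_i\rangle = 0$ at $o$, yields a scalar equation for $f := \langle \xi, x\rangle$. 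Substituting $\tilde f := f/(V - 1)$ and using the identity $\Delta_\Sigma V = 2V - 2|X^\bot|^2/(V - 1)$ (which follows from $\operatorname{div}_\Sigma X = 2V$, the equality case of Lemma~\ref{div X}), a direct expansion shows the algebraic coefficient of $\tilde f$ vanishes identically, giving $\Delta_\Sigma \tilde f = 0$ on $\Sigma \setminus \{o\}$.

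Next I analyze $\tilde f$ near $o$. In geodesic normal coordinates $(u^1, u^2)$ on $\Sigma$ at $o$, a Taylor expansion of the immersion in $\mathbb{R}^{1,n}$ gives
\[
V - 1 = \tfrac12 |u|^2 + O(|u|^3), \quad \langle \xi, x\rangle = \tfrac12 u^i u^j \langle B(e_i, e_j), \xi\rangle + O(|u|^3),
\]
where $B$ is the second fundamental form of $\Sigma$ in $\mathbb{H}^n$. Hence $\tilde f$ is bounded near $o$; the removable singularity theorem for bounded harmonic functions on a compact $2$-manifold extends $\tilde f$ to a harmonic, hence constant, function on $\Sigma$. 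Continuity at $o$ then forces $u^i u^j \langle B(e_i, e_j), \xi\rangle/|u|^2$ to be independent of the direction $u/|u|$; reading off the $\cos 2\theta$ and $\sin 2\theta$ Fourier modes in the angular variable gives $\langle B(e_i, e_j), \xi\rangle = \lambda(\xi)\delta_{ij}$. Letting $\xi$ range over $N_o\Sigma$ and $o$ over $\Sigma$ yields $B = \tfrac12 \mathbf{H} \otimes g$ everywhere: $\Sigma$ is totally umbilic in $\mathbb{H}^n$.

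To conclude, Codazzi applied to the umbilic form gives $\nabla^\bot \mathbf{H} = 0$, so $|\mathbf{H}|$ is constant; it is strictly positive because a closed totally geodesic $\mathbb{H}^2$ cannot sit in $\mathbb{H}^n$. The line subbundle $L = \operatorname{span}(\mathbf{H}) \subset N\Sigma$ is then $\nabla^\bot$-parallel and contains the image of $B$, so the classical reduction-of-codimension theorem in space forms places $\Sigma$ inside a totally geodesic $\mathbb{H}^3 \subset \mathbb{H}^n$. Within this $\mathbb{H}^3$, $\Sigma$ is a closed totally umbilic hypersurface, and the classification of umbilic hypersurfaces in $\mathbb{H}^3$ (geodesic spheres, horospheres, equidistant surfaces, totally geodesic $\mathbb{H}^2$) leaves only the geodesic sphere as closed. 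I expect the singular analysis of $\tilde f$ at $o$ --- boundedness plus extraction of the angular constancy --- to be the main technical obstacle; the Minkowski algebra and the codimension reduction are otherwise mechanical.
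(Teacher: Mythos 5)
Your proposal is correct, but it takes a genuinely different route from the paper's. Both proofs start identically: equality means the pointwise vanishing of $\tfrac{1}{w}X^{\bot}+\tfrac{1}{2}\mathbf{H}$ for \emph{every} base point $o\in\Sigma$, with embeddedness supplied by Corollary \ref{cor:criterion}. The paper then fixes a single point $y$ with $\mathbf{H}(y)\neq 0$ and lets the \emph{base point} $x$ range over $\Sigma$: in the hyperboloid model the equality condition becomes the two-point algebraic identity $\tfrac{1}{2}H(y)=\langle x,\nu\rangle/(1+\langle x,y\rangle)$, which first places all of $\Sigma$ in the totally geodesic $\mathbb{H}^3$ spanned by $T_y\Sigma$ and $\mathbf{H}(y)$, and then --- after ruling out $H(y)<0$ by an explicit polar-coordinate computation --- forces every $x\in\Sigma$ to lie at distance $t=\bar t$ from the point $\exp_y(-t\nu)$, exhibiting the geodesic sphere together with its center and radius. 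Your route instead establishes total umbilicity: the cancellation making $f/w$ harmonic is genuine (it reduces to $\sinh^2 r=w(V+1)$ combined with $\operatorname{div}_\Sigma X=2V$ and the equality condition), the removable-singularity and constancy step is sound, and the angular analysis at $o$ correctly extracts $\langle B(e_i,e_j),\xi\rangle=\lambda(\xi)\delta_{ij}$; the endgame via Codazzi, reduction of codimension, and the classification of closed umbilic surfaces in $\mathbb{H}^3$ is standard. The trade-off: the paper's argument is elementary and self-contained (no harmonic function theory, no Erbacher-type reduction of codimension, no classification theorem, and it uses the equality condition only at one evaluation point), while yours isolates the conceptual content --- equality is equivalent to total umbilicity --- in a form more amenable to generalization or to stability versions of the inequality. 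Both arguments are valid.
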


\begin{proof}
  One easily verifies the equality of {\eqref{mono}} for geodesic 2-spheres.
  If equality holds in {\eqref{mono}}, then $\Sigma$ is of multiplicity one at
  every point of $x \in \Sigma$ by Corollary \ref{cor:criterion}. Also, there
  is some point $y \in \Sigma$ such that $\mathbf{H} (y) \neq 0$. Most
  importantly, the mean curvature vector $\mathbf{H}$ at $y \in \Sigma$ can be
  evaluated in terms of
  \begin{equation}
    - \frac{1}{2} \mathbf{H} (y) = \frac{\sinh \rho}{w (\rho)} \tau^{\bot},
    \label{h3}
  \end{equation}
  where $\rho$ and $\tau \in T_y \mathbb{H}^n$ are respectively the length and
  the velocity vector at $y$ of the geodesic segment from from any $x \in
  \Sigma$ to $y$. Let $\{e_1, e_2 \}$ span $T_y \Sigma$, note that
  {\eqref{h3}} says that $\tau$ is a linear combination of $\{e_1, e_2,
  \mathbf{H} (y)\}$, and hence every point $x \in \Sigma$ lies in a
  $\mathbb{H}^3$ subspace of $\mathbb{H}^n$. $\mathbb{H}^3$ is totally
  geodesic in $\mathbb{H}^n$, we can then consider $n = 3$ only. Now we
  identify every point and the tangent space as elements in $\mathbb{R}^{1,
  3}$, and from {\eqref{geodesic}},
  \begin{equation}
    \tau = x \sinh \rho + z \cosh \rho .
  \end{equation}
  Let $\nu$ be the outward pointing normal of $\Sigma$ in $\mathbb{H}^3$ at
  $y$, by {\eqref{velocity and distance}},
  \begin{align}
    - \frac{1}{2} \mathbf{H} (y) & = \frac{\sinh \rho}{w (\rho)} \langle x
    \sinh \rho + z \cosh \rho, \nu \rangle \nu\\
    & = \frac{\sinh^2 \rho}{w (\rho)} \langle x, \nu \rangle \nu +
    \frac{\cosh \rho}{w (\rho)} \langle y + x \langle x, y \rangle, \nu
    \rangle \nu\\
    & = \frac{\sinh^2 \rho}{w (\rho)} \langle x, \nu \rangle \nu +
    \frac{\cosh \rho}{w (\rho)} \langle x, \nu \rangle \langle x, y \rangle
    \nu\\
    & = \frac{\sinh^2 \rho}{w (\rho)} \langle x, \nu \rangle \nu -
    \frac{\cosh^2 \rho}{w (\rho)} \langle x, \nu \rangle \nu\\
    & = - \frac{1}{w (\rho)} \langle x, \nu \rangle \nu .
  \end{align}
  Since $\mathbf{H} = - H \nu$, the mean curvature $H$ at $y$ of $\Sigma$
  immersed in $\mathbb{H}^3$ is
  \begin{equation}
    \frac{1}{2} H (y) = - \frac{\langle x, \nu \rangle}{w (\rho)} =
    \frac{\langle x, \nu \rangle}{1 - \cosh \rho} = \frac{\langle x, \nu
    \rangle}{1 + \langle x, y \rangle} . \label{beautiful identity}
  \end{equation}
  We prove now that $H (y)$ can not be less than 0. Assume on the contrary
  that $H (y) = - 2 \coth t < 0$ where $t>0$, we fix the coordinates of $\mathbb{R}^{1,
  3}$ now by setting the point $\exp_y (- t \nu)$ to be $o = (1, 0, \ldots, 0)
  \in \mathbb{R}^{1, n}$ where $\exp_y$ is the exponential map of
  $\mathbb{H}^3$ at $y$. Note that $o$ is the origin under polar coordinates
  \begin{align}
    {}[0, \infty) \times \mathbb{S}^2 & \to \mathbb{R}^{1, n}\\
    (s, \theta) & \mapsto (\cosh s, \theta \sinh s),
  \end{align}
  where $\theta \in \mathbb{S}^2 \subset \mathbb{R}^3$. We assume that $y =
  (\cosh t, \theta \sinh t)$ since the distance from $o$ to $y$ is $t$, and $x
  = (\cosh \bar{t}, \bar{\theta} \sinh \bar{t}), \bar{t} > 0$. $\nu$ is then
  $(\sinh t, \theta \cosh t)$. By inserting the values of $y, \nu$ and $x$ to
  the identity {\eqref{beautiful identity}}, we get
  \begin{equation}
    - \coth t = \frac{- \cosh \bar{t} \sinh t + \sinh \bar{t} \cosh t \theta
    \cdot \bar{\theta}}{1 - \cosh t \cosh \bar{t} + \sinh t \sinh \bar{t}
    \theta \cdot \bar{\theta}} .
  \end{equation}
  Here $\theta \cdot \bar{\theta}$ is the standard $\mathbb{R}^3$ inner
  product. This readily reduces to
  \begin{align}
    0 & = 2 \sinh \bar{t} \sinh t \cosh t \theta \cdot \bar{\theta} - \cosh
    \bar{t} \sinh^2 t\\
    & \quad + \cosh t - \cosh^2 t \cosh \bar{t}\\
    & = \sinh \bar{t} \sinh (2 t) \theta \cdot \bar{\theta} - \cosh \bar{t}
    \cosh (2 t)
  \end{align}
  which is however not possible since $\theta \cdot \bar{\theta} \leqslant 1$.
  
  So $H (y) > 0$. We can set instead $H (y) = 2 \coth t > 0$ with $t>0$. We use this $t$
  and do the same thing as before, we arrive
  \begin{equation}
    \coth t = \frac{- \cosh \bar{t} \sinh t + \sinh \bar{t} \cosh t \theta
    \cdot \bar{\theta}}{1 - \cosh t \cosh \bar{t} + \sinh t \sinh \bar{t}
    \theta \cdot \bar{\theta}},
  \end{equation}
  and finally $t = \bar{t}$. Then $\Sigma$ has to be a geodesic sphere of
  radius $t$.
\end{proof}

\end{document}